\documentclass[11pt,a4paper]{article}
\usepackage[utf8]{inputenc}
\usepackage[T1]{fontenc}
\usepackage{amsmath,amssymb,amsthm}
\usepackage[margin=2.5cm]{geometry}
\usepackage{booktabs}
\usepackage{xcolor}
\usepackage[colorlinks=true,linkcolor=blue!45!black,citecolor=blue!45!black,
            urlcolor=blue!45!black]{hyperref}
\theoremstyle{plain}
\newtheorem{teo}{Theorem}[section]
\newtheorem{prop}[teo]{Proposition}
\newtheorem{lema}[teo]{Lemma}
\newtheorem{cor}[teo]{Corollary}
\theoremstyle{definition}
\newtheorem{defi}[teo]{Definition}
\newtheorem{ex}[teo]{Example}
\theoremstyle{remark}
\newtheorem{obs}[teo]{Remark}

\newcommand{\X}{\mathcal{X}}
\newcommand{\W}{\mathcal{W}}
\newcommand{\R}{\mathbb{R}}
\newcommand{\C}{\mathbb{C}}

\title{Rational $S$-functions for second order ODEs\\
with non-Liouvillian first integrals}

\author{L.\,G.\,S.\ Duarte\thanks{Universidade do Estado do Rio de
Janeiro, Instituto de Física, Depto.\ de Física Teórica.
\texttt{lgsduarte@gmail.com}}
\and
L.\,A.\,C.\,P.\ da Mota\thanks{Universidade do Estado do Rio de Janeiro,
Instituto de Física, Depto.\ de Física Teórica.
\texttt{lacpdamota@uerj.br}}}
\date{October 2026}

\begin{document}
\maketitle

\begin{abstract}
The $S$-function method for rational second order ODEs
$y''=\varphi(x,y,y')$ uses $S=I_y/I_z$, attached to a first integral $I$,
which satisfies a Riccati equation along the flow; it was designed for
Liouvillian $I$. We study the case of rational $S$ with non-Liouvillian $I$.
From any rational $g(x,y,z)$ with $g_z\not\equiv0$ and any linear equation
$A_2w''+A_1w'+A_0w=0$ with polynomial coefficients, the substitution
$w'/w=g(x,y,y')$ yields a rational 2ODE whose first integral is built from
two independent solutions, and $S=g_y/g_z$: the Wronskian cancels, so $S$
does not see the linear equation, and one rational $S$ is shared by
equations with and without Liouvillian first integrals. Conversely, given
$(\varphi,S)$, membership in this class reduces to a rational first integral
of an explicit planar vector field plus a Möbius-invariant Riccati test in
$x$, which recovers the linear equation. A transfer theorem shows that $I$ is
Liouvillian if and only if this equation has a Liouvillian solution, so
Liouvillianity is decidable on the class by Kovacic's algorithm: a
three-variable problem with no a priori degree bound is reduced to two
classical ones. On nine pull-back equations of degree up to $15$, given
without information on their construction, the procedure recovers the Airy
equation and the Bessel equation of order zero.
\end{abstract}

\noindent\textbf{Keywords:} second order ODEs; $S$-function; Liouvillian
first integrals; Kovacic algorithm; Darboux theory.

\section{Introduction}\label{sec:intro}

\subsection{The $S$-function}

Let
\begin{equation}\label{eq:2ode}
y''=\varphi(x,y,z),\qquad z:=y',\qquad \varphi=\frac MN\in\R(x,y,z),
\end{equation}
with $M,N$ coprime polynomials, and let
\begin{equation}\label{eq:campo}
\X:=\partial_x+z\,\partial_y+\varphi\,\partial_z
\end{equation}
be the associated vector field. If $I(x,y,z)$ is a first integral of $\X$,
the \emph{$S$-function associated with $\X$ through $I$} is
\begin{equation}\label{eq:S}
S:=\frac{I_y}{I_z},
\end{equation}
introduced in~\cite{S2017} as the basis of a method --- implemented in the
Maple package \texttt{InSyDE} --- for 2ODEs on which both the Darboux and the
Lie approaches stall. Differentiating~\eqref{eq:S} along $\X$ gives the
equation on which everything here depends,
\begin{equation}\label{eq:ric}
\X(S)=S^2+\varphi_z\,S-\varphi_y .
\end{equation}
Two features of~\eqref{eq:ric} will be used repeatedly: it is a Riccati
equation \emph{along the flow}, and it is \emph{linear in $\varphi$} (note
that $\varphi_x$ does not occur).

The method of~\cite{S2017} was built for, and analysed under, the hypothesis
that $I$ is Liouvillian. The question addressed here is the complementary
one:

\begin{quote}
\emph{can $S$ be rational while $I$ is not Liouvillian, and if so, can one
decide which is the case?}
\end{quote}

\subsection{Two equations that differ by one term}

The following pair, which we construct in
Section~\ref{sec:gerador}, states the problem in two lines. Consider
\begin{equation}\label{eq:gemeo1}
y''=-y^3-3y\,y'
\end{equation}
and
\begin{equation}\label{eq:gemeo2}
y''=x\,y-y^3-3y\,y' .
\end{equation}
Equation~\eqref{eq:gemeo1} has the \emph{rational} first integral
\[
I=\frac{y^2+y'}{x(y^2+y')-y},
\]
while the first integral of~\eqref{eq:gemeo2} is $I=f_1/f_2$ with
$f_i=y\,w_i'-(y^2+y')\,w_i$ and $w_1,w_2$ Airy functions, for which
Kovacic's algorithm~\cite{Kovacic} excludes Liouvillian solutions. The two
equations differ by the single term $x\,y$, and they have the \emph{same}
\[
S=\frac{y^2-y'}{y}.
\]
Thus $S$ rational is compatible with both answers, and no inspection of $S$
can separate them. The present work explains why this happens
(Corollary~\ref{cor:cega}) and gives a procedure that does separate them
(Theorem~\ref{teo:inverso}).

\subsection{Related work}\label{sec:related}

Three lines of work meet here.

\medskip
\noindent\textbf{(i) Planar systems from linear equations.} Giacomini, Giné
and Grau~\cite{GGG} consider the substitution $w'(x)=g(x,y)w(x)$, with $g$
rational and $y=y(x)$ defined implicitly, $w$ a solution of a homogeneous
linear second order ODE. They obtain a rational \emph{first order} equation
$dy/dx=Q/P$ whose first integral is $H=f_1/f_2$ with $f_i=g_1w_i'-g_0w_i$,
and which is, in general, not a Liouvillian function. The construction of
Section~\ref{sec:gerador} is the lift of~\cite{GGG} one dimension up; to our
knowledge this lift has not appeared --- we examined the full citation list
of~\cite{GGG} (21 items) without finding it.

\medskip
\noindent\textbf{(ii) Autonomous oscillators with transcendental invariant
curves.} Giné and Sinelshchikov~\cite{GS} study autonomous oscillators
$x_{tt}+h(x)x_t^2+f(x)x_t+g(x)=0$ through invariant curves that are
polynomial in a privileged variable with transcendental coefficients. Their
Proposition~1 --- a product $\prod H_i^{q_i}$ is a first integral iff
$\sum q_i\lambda_i=0$ on the cofactors --- is the planar form of the
mechanism we use in Proposition~\ref{prop:I} below, and their
Proposition~2 exhibits a Painlevé--Gambier oscillator whose first integral is
$H_1H_2^{-1}$ with $H_1,H_2$ built from the Airy functions $\mathrm{Ai}$ and
$\mathrm{Bi}$ and with equal cofactors. We stress the differences. Their
systems are \emph{autonomous}, which in our setting is exactly the degenerate
branch isolated in Lemma~\ref{lema:trivial}; the invariant curves are found
case by case by an algorithm, not produced by a closed-form generator; and
the $S$-function does not appear, so there is no inverse problem and no
reduction to Kovacic's algorithm.

\medskip
\noindent\textbf{(iii) $R$-integrability.} Demina and
Nechitailo~\cite{DN} call a rational planar system \emph{$R$-integrable} when
it has two independent invariants satisfying a linear second order ODE, prove
that any rational system obtained from a Riccati system by a rational
transformation is $R$-integrable, and exhibit polynomial Liénard systems with
no invariant algebraic curves whose first integrals are expressed through
Airy functions. Their class is the planar counterpart of the equations
produced here.

\medskip
\noindent There is also a point of contact inside the $S$-function
literature itself. In~\cite{S2017} the Riccati~\eqref{eq:ric} is linearised
by $S=-\X(w)/w$, yielding
\begin{equation}\label{eq:linfluxo}
\X^2(w)=\varphi_z\,\X(w)+\varphi_y\,w .
\end{equation}
This is a second order linear equation \emph{along the flow}: its
coefficients $\varphi_z,\varphi_y$ are functions of $(x,y,z)$, and Kovacic's
algorithm does not apply to it. The construction below is precisely the case
in which a second order linear equation with coefficients in $x$
\emph{alone} is available, and Theorem~\ref{teo:inverso} decides when that
happens and produces the equation.

\subsection{Contributions}

\begin{enumerate}\itemsep3pt
\item A closed-form generator of rational 2ODEs with rational $S$ and first
integral built on a linear second order ODE (Section~\ref{sec:gerador}),
with the formula $S=g_y/g_z$ (Proposition~\ref{prop:S}) and the consequence
that $S$ does not see the linear equation (Corollary~\ref{cor:cega}).
\item A characterisation of the pairs $(\varphi,S)$ so obtained
(Theorem~\ref{teo:inverso}), which recovers the linear equation.
\item A transfer theorem (Theorem~\ref{teo:transf}): the first integral is
Liouvillian if and only if the recovered equation has a Liouvillian
solution. Combined with~\cite{Kovacic} this makes Liouvillianity
\emph{decidable} on the class (Corollary~\ref{cor:decisao}).
\item The resulting trichotomy (Section~\ref{sec:tri}) and an algorithm,
validated on three batteries (Section~\ref{sec:exp}).
\end{enumerate}

\section{Preliminaries}\label{sec:prelim}

Throughout, $\varphi\in\R(x,y,z)$ and $\X$ is as in~\eqref{eq:campo}. We call
a pair $(\varphi,S)$ \emph{admissible} when $S\in\R(x,y,z)$
satisfies~\eqref{eq:ric}.

The following elementary lemma is responsible for a systematic degeneracy and
explains why autonomous examples --- which is what most of the literature
quoted in Section~\ref{sec:related} provides --- say nothing about the
question of this paper.

\begin{lema}[the trivial branch]\label{lema:trivial}
$S=-\varphi/z$ is admissible if and only if $\varphi_x=0$.
\end{lema}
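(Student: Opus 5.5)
The plan is a direct verification. The Riccati equation~\eqref{eq:ric} is an identity between rational functions of $(x,y,z)$, and $S=-\varphi/z$ is certainly rational. So I will compute both sides of~\eqref{eq:ric} for this $S$ and identify their difference.

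First I compute the left-hand side. Since $\X$ is a derivation with $\X(z)=\varphi$, the quotient rule gives
\[
\X(S)=-\frac{\X(\varphi)}{z}+\frac{\varphi\,\X(z)}{z^2}
=-\frac{\varphi_x+z\varphi_y+\varphi\varphi_z}{z}+\frac{\varphi^2}{z^2}.
\]
Here I have expanded $\X(\varphi)=\varphi_x+z\varphi_y+\varphi\varphi_z$ directly from~\eqref{eq:campo}.

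Next I expand the right-hand side:
\[
S^2+\varphi_zS-\varphi_y=\frac{\varphi^2}{z^2}-\frac{\varphi\varphi_z}{z}-\varphi_y .
\]
Subtracting, the terms $\varphi^2/z^2$, $\varphi\varphi_z/z$ and $\varphi_y$ cancel exactly, which leaves
\[
\X(S)-\bigl(S^2+\varphi_zS-\varphi_y\bigr)=-\frac{\varphi_x}{z}.
\]
This vanishes identically in $\R(x,y,z)$ if and only if $\varphi_x\equiv 0$, and both directions of the lemma follow at once.

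No step here is a genuine obstacle. The one point needing care is the bookkeeping: the term $\varphi_x$ does occur in $\X(S)$, through $\X(\varphi)$, even though it is absent from the right-hand side of~\eqref{eq:ric}. That asymmetry is precisely what produces the condition. As a sanity check, in the autonomous case this $S$ comes from the first integral $I=x-\int dy/z$ along the orbits, i.e.\ from the time-translation integral, which is why it carries no information.
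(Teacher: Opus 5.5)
Your proof is correct and follows the paper's own argument: substitute $S=-\varphi/z$ into~\eqref{eq:ric}, and the residual is exactly $-\varphi_x/z$. Only your closing sanity check is wrong. From $\X(I)=0$ one gets $I_y/I_z=-\varphi/z-I_x/(z I_z)$, so $S=-\varphi/z$ comes precisely from first integrals with $I_x=0$, i.e.\ the orbit integral $H(y,z)$ of the planar system, as in Remark~\ref{obs:trivial}. It does not come from the time-translation integral $x-\int dy/z$, which has $I_x=1$ and hence a different $S$.
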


\begin{proof}
Substituting $S=-\varphi/z$ in~\eqref{eq:ric} and simplifying, the
residual is exactly $-\varphi_x/z$.
\end{proof}

\begin{obs}\label{obs:trivial}
Equivalently: every autonomous rational 2ODE has a rational $S$, namely
$-\varphi/z$, and it is the $S$ of the first integral with $I_x=0$, i.e.\ of
the orbit integral of the associated planar system. The reduction it provides
is the classical reduction of order of an autonomous equation, so nothing is
gained. Accordingly, \emph{none} of the pairs $(\varphi,S)$ below is the
trivial one: in every case the rational $S$ studied differs from
$-\varphi/z$. Several of the equations are autonomous (the first twin
equation~\eqref{eq:gemeo1}, the harmonic oscillator of
Section~\ref{sec:tri}, and $28$ of the $63$ pairs of Section~\ref{sec:exp}),
but for them we study an $S$ other than the trivial one; we checked
$S\neq-\varphi/z$ in all $63$ cases.
\end{obs}

\section{The generator}\label{sec:gerador}

Let $g=g_0/g_1\in\R(x,y,z)$ with $g_0,g_1\in\R[x,y,z]$ coprime and
$g_z\not\equiv0$, and let
\begin{equation}\label{eq:lin}
A_2(x)\,w''+A_1(x)\,w'+A_0(x)\,w=0,\qquad A_i\in\R[x],\ A_2\not\equiv0 .
\end{equation}
Impose on a solution $y(x)$ of an as yet unknown 2ODE the relation
\begin{equation}\label{eq:mud}
\frac{w'(x)}{w(x)}=g\bigl(x,y(x),y'(x)\bigr).
\end{equation}
Differentiating~\eqref{eq:mud} along the solution gives
$w''/w=g_x+z\,g_y+y''g_z+g^2$, while~\eqref{eq:lin} gives
$w''/w=-(A_1g+A_0)/A_2$. Eliminating $w''/w$ and solving for $y''$:
\begin{equation}\label{eq:phi}
y''=\varphi:=
\frac{-\dfrac{A_1\,g+A_0}{A_2}-g^{2}-g_x-z\,g_y}{g_z}\;\in\;\R(x,y,z).
\end{equation}

\begin{prop}[invariants and first integral]\label{prop:I}
Let $\varphi$ be given by~\eqref{eq:phi} and, for $w$ a solution
of~\eqref{eq:lin}, set $f:=g_1w'-g_0w$. Then
\[
\X(f)=k\,f,\qquad
k=\frac{\X(g_1)-g_1A_1/A_2-g_0}{g_1},
\]
and $k$ does not depend on $w$. Consequently, if $\{w_1,w_2\}$ is a
fundamental system for~\eqref{eq:lin} and $f_i:=g_1w_i'-g_0w_i$, then
\[
I:=\frac{f_1}{f_2}
\]
is a first integral of $\X$.
\end{prop}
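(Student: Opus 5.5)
The proof is a direct computation of $\X(f)$, using the two defining relations \eqref{eq:lin} and \eqref{eq:phi}. Here $w=w(x)$ depends only on $x$, so $\X(w)=w'$ and $\X(w')=w''=-(A_1w'+A_0w)/A_2$. By the Leibniz rule,
\[
\X(f)=\X(g_1)\,w'+g_1w''-\X(g_0)\,w-g_0w' .
\]
After substituting $w''$ we get
\[
\X(f)=\Bigl(\X(g_1)-g_0-\tfrac{g_1A_1}{A_2}\Bigr)w'-\Bigl(\X(g_0)+\tfrac{g_1A_0}{A_2}\Bigr)w .
\]
We want this to equal $kf=k g_1w'-k g_0w$ with the stated $k$. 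The $w'$ coefficients agree by the definition of $k$. It remains to check the $w$ coefficients, namely
\[
\X(g_0)+\frac{g_1A_0}{A_2}=\frac{g_0}{g_1}\Bigl(\X(g_1)-g_0-\frac{g_1A_1}{A_2}\Bigr).
\]
Dividing by $g_1$ and rearranging, this is equivalent to
\[
\frac{g_1\X(g_0)-g_0\X(g_1)}{g_1^2}+g^2+\frac{A_1g+A_0}{A_2}=0,
\]
that is, $\X(g)+g^2+(A_1g+A_0)/A_2=0$.

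Now $\X(g)=g_x+zg_y+\varphi g_z$. By \eqref{eq:phi}, which can be divided out because $g_z\not\equiv0$, we have $\varphi g_z=-(A_1g+A_0)/A_2-g^2-g_x-zg_y$. Hence $\X(g)=-(A_1g+A_0)/A_2-g^2$, which is exactly the required identity. So $\X(f)=kf$. Since $k$ involves only $g_0,g_1,A_1,A_2$ and $\X$, it does not depend on $w$.

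For the first integral, take $f_1,f_2$ built from a fundamental system. They share the cofactor $k$, so
\[
\X(f_1/f_2)=\frac{kf_1f_2-f_1kf_2}{f_2^2}=0 .
\]
The one point needing care is that $f_2\not\equiv0$, so that $I$ is defined, and that $I$ is nonconstant. Suppose $c_1f_1+c_2f_2\equiv0$ with $(c_1,c_2)\ne(0,0)$. Then $W=c_1w_1+c_2w_2$, a nonzero solution, satisfies $g_1W'=g_0W$ identically in $(x,y,z)$. Since $g_0,g_1$ are coprime and $g$ depends on $z$, while $W'/W$ depends only on $x$, this is impossible. So $f_1,f_2$ are linearly independent over $\R$; in particular neither vanishes identically, and $I$ is a nonconstant first integral.

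I expect no real obstacle: the key identity is just the defining relation \eqref{eq:phi} rewritten as the Riccati relation $\X(g)=-g^2-(A_1g+A_0)/A_2$. The only subtlety is this last nondegeneracy remark.
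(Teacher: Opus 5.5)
Your proof is correct and follows the paper's argument. The paper likewise expands $\X(f)$ using $w''=-(A_1w'+A_0w)/A_2$ and reduces proportionality to $f$ to $\X(g)+g^2+(A_1g+A_0)/A_2=0$, which is \eqref{eq:phi} multiplied by $g_z$. Your closing check that $f_1,f_2$ are linearly independent (so $f_2\not\equiv0$ and $I$ is nonconstant) is a valid addition that the paper leaves implicit.
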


\begin{proof}
Using $w''=-(A_1w'+A_0w)/A_2$,
\[
\X(f)=\Bigl[\X(g_1)-\tfrac{g_1A_1}{A_2}-g_0\Bigr]w'
      -\Bigl[\tfrac{g_1A_0}{A_2}+\X(g_0)\Bigr]w .
\]
This is a multiple of $f=g_1w'-g_0w$ if and only if
$g_0\bigl[\X(g_1)-g_1A_1/A_2-g_0\bigr]=g_1\bigl[g_1A_0/A_2+\X(g_0)\bigr]$.
Dividing by $-g_1^2$ and using
$\X(g)=\bigl(g_1\X(g_0)-g_0\X(g_1)\bigr)/g_1^2$, this reads
\[
\X(g)+g^2+\frac{A_1g+A_0}{A_2}=0,
\]
i.e.\ $g_x+z g_y+\varphi g_z=-g^2-(A_1g+A_0)/A_2$, which is exactly the
definition~\eqref{eq:phi} of $\varphi$. The cofactor is the stated $k$, in
which $w$ does not occur; hence $\X(f_i)=k f_i$ for $i=1,2$ and
$\X(I)=(kf_1f_2-f_1kf_2)/f_2^2=0$.
\end{proof}

\begin{obs}
Formula~\eqref{eq:phi} is not an ansatz: the proof shows it is
\emph{equivalent} to requiring that $f_1$ and $f_2$ be invariants of $\X$
with a common cofactor. In the planar setting this mechanism is
Proposition~1 of~\cite{GS}; see also~\cite{GGG}.
\end{obs}

\subsection{The $S$-function of the construction}

\begin{prop}\label{prop:S}
In the setting of Proposition~\ref{prop:I}, with Wronskian
$W=w_1w_2'-w_1'w_2\not\equiv0$,
\[
S=\frac{I_y}{I_z}=\frac{g_y}{g_z}.
\]
In particular $S$ is rational and explicit, and it does not depend on the
linear equation~\eqref{eq:lin}.
\end{prop}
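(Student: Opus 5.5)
The plan is to compute $I_y$ and $I_z$ directly from $I=f_1/f_2$. The only features we need are that $w_1,w_2$ depend on $x$ alone, and that $f_i$ depends on $(y,z)$ only through $g_0,g_1$. We write $f_i=g_1\bigl(w_i'-g\,w_i\bigr)$. The factor $g_1$ cancels in the quotient, so
\[
I=\frac{w_1'-g\,w_1}{w_2'-g\,w_2},
\]
which is a Möbius transformation in the single quantity $g$, with coefficients that are functions of $x$ only. This simplification is the key step: it makes $I$ a function of $(x,g(x,y,z))$. By the chain rule, $I_y=\Phi'(g)\,g_y$ and $I_z=\Phi'(g)\,g_z$, where $\Phi(t)=(w_1'-tw_1)/(w_2'-tw_2)$.

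Next we compute $\Phi'(t)$ explicitly:
\[
\Phi'(t)=\frac{-w_1(w_2'-tw_2)+w_2(w_1'-tw_1)}{(w_2'-tw_2)^2}
=\frac{-(w_1w_2'-w_1'w_2)}{(w_2'-tw_2)^2}=\frac{-W}{(w_2'-tw_2)^2}.
\]
The $t$-terms cancel, and we are left with the Wronskian in the numerator. Since $W\not\equiv0$ by hypothesis, $\Phi'(g)\not\equiv0$. The hypothesis $g_z\not\equiv0$ then gives $I_z\not\equiv0$, so $S$ is defined. Dividing $I_y$ by $I_z$, the common factor $-W/(w_2'-gw_2)^2$ cancels and leaves $S=g_y/g_z$.

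The final claims follow directly. $g_y/g_z$ is a quotient of rational functions, so $S$ is rational. It involves only $g$, so it is independent of $A_0,A_1,A_2$ and of the choice of fundamental system.

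The only point needing care is the justification that $w_2'-g\,w_2\not\equiv0$ as a function of $(x,y,z)$. If it vanished identically, $g$ would equal $w_2'/w_2$, a function of $x$ alone, contradicting $g_z\not\equiv0$. So the cancellation is legitimate. We expect no real obstacle beyond this bookkeeping.
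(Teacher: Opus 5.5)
Your proof is correct, and it takes a somewhat different route from the paper's.

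The paper applies the quotient rule directly to $I=f_1/f_2$, using $f_{i,y}=g_{1,y}w_i'-g_{0,y}w_i$ and the analogous formula in $z$, and expands. The $w_1'w_2'$ and $w_1w_2$ terms cancel, leaving $-(g_{0,y}g_1-g_0g_{1,y})W$ over the analogous $z$-expression. Then $W$ cancels, and dividing by $g_1^2$ gives $g_y/g_z$.

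You instead factor out $g_1$ first, and observe that $I=\Phi(x,g)$ is a M\"obius function of $g$ whose coefficients depend on $x$ only. This is essentially the M\"obius identity of Lemma~\ref{lema:mobius}, which the paper introduces only later, for the transfer theorem. The chain rule then does all the work. The paper's cancellation reappears in your argument as the disappearance of the $t$-terms in $\Phi_t=-W/(w_2'-tw_2)^2$.

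What your version buys is the structural reason for the result. Any first integral that depends on $(y,z)$ only through $g$ has $S=g_y/g_z$. The Wronskian's only role is to guarantee $\Phi_t\not\equiv0$, which is exactly why $S$ cannot see~\eqref{eq:lin}. Your version also checks explicitly that $I$ is defined and that $I_z\not\equiv0$, points the paper leaves tacit. The paper's computation is a direct verification on the $f_i$ as defined, exhibiting $W$ as an explicit common factor, but it hides why the cancellation occurs.

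One small point: your argument that $w_2'-gw_2\not\equiv0$ tacitly uses $w_2\not\equiv0$. This follows from $W\not\equiv0$.
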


\begin{proof}
From $I=f_1/f_2$,
\[
S=\frac{f_{1,y}f_2-f_1f_{2,y}}{f_{1,z}f_2-f_1f_{2,z}},
\]
and, since $w_i=w_i(x)$, $f_{i,y}=g_{1,y}w_i'-g_{0,y}w_i$ and likewise for
$z$. Expanding the numerator, the terms in $w_1'w_2'$ and in $w_1w_2$ cancel
identically and
\[
f_{1,y}f_2-f_1f_{2,y}=\bigl(g_{0,y}g_1-g_0g_{1,y}\bigr)(w_1'w_2-w_1w_2')
=-\bigl(g_{0,y}g_1-g_0g_{1,y}\bigr)W,
\]
with the analogous identity for $z$. The factor $W$ is common and cancels,
leaving
\[
S=\frac{g_{0,y}g_1-g_0g_{1,y}}{g_{0,z}g_1-g_0g_{1,z}}=\frac{g_y}{g_z}
\]
after dividing numerator and denominator by $g_1^2$.
\end{proof}

\begin{cor}\label{cor:cega}
For fixed $g$, the family $\{\varphi_A\}$ obtained by letting
$A=(A_0,A_1,A_2)$ vary in~\eqref{eq:phi} consists of pairwise distinct
rational 2ODEs sharing the same $S=g_y/g_z$.
\end{cor}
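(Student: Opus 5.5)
The claim has two parts, and I will treat them separately. The \emph{shared $S$} part is immediate from Proposition~\ref{prop:S}. For every admissible triple $A$ the first integral $I=f_1/f_2$ of Proposition~\ref{prop:I} has $S=g_y/g_z$. This expression involves only $g$, so it is the same for every member of the family. The only hypothesis to check is $W\not\equiv0$, and it holds because $\{w_1,w_2\}$ is a fundamental system of~\eqref{eq:lin}.

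For \emph{pairwise distinctness}, I first need to say what "distinct" is meant to compare. By~\eqref{eq:phi}, $\varphi_A$ depends on $A$ only through the ratios $a_1:=A_1/A_2$ and $a_0:=A_0/A_2$. So triples proportional by a nonzero rational function of $x$ give the same equation and the same linear ODE~\eqref{eq:lin}. I will therefore read the statement as: distinct normalized pairs $(a_0,a_1)\in\R(x)^2$ give distinct $\varphi_A$. Take two triples $A,B$ with normalized pairs $(a_0,a_1)$ and $(b_0,b_1)$. Subtracting the two instances of~\eqref{eq:phi}, the terms $-g^2-g_x-zg_y$ cancel and
\[
\varphi_A-\varphi_B=-\,\frac{(a_1-b_1)\,g+(a_0-b_0)}{g_z}.
\]
This is well defined because $g_z\not\equiv0$.

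Now suppose $\varphi_A=\varphi_B$. Then $(a_1-b_1)g+(a_0-b_0)=0$ in $\R(x,y,z)$. If $a_1\neq b_1$, this gives $g=-(a_0-b_0)/(a_1-b_1)\in\R(x)$, hence $g_z\equiv0$, which contradicts the hypothesis on $g$. So $a_1=b_1$, and then $a_0=b_0$. Therefore distinct normalized pairs give distinct $\varphi$.

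The computation itself is routine. The only real obstacle is the interpretive one: the statement is literally false for proportional triples, so the proof has to state explicitly that distinctness is up to this normalization. I would add that sentence at the start of the proof. As a side remark, the argument also shows that $\varphi$ determines the normalized linear equation uniquely, which anticipates the recovery statement of Theorem~\ref{teo:inverso}.
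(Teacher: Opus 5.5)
Your proof is correct, and for the shared-$S$ part it matches the paper exactly: the paper gives no separate proof and treats the corollary as an immediate consequence of Proposition~\ref{prop:S}. The paper offers no argument for pairwise distinctness, so your subtraction argument fills that omission correctly: $g_z\not\equiv0$ forces $a_1=b_1$ and then $a_0=b_0$. Your caveat is also right: distinctness can only hold up to proportionality of $(A_0,A_1,A_2)$ over $\R(x)$, since proportional triples give the same $\varphi_A$.
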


Among the members of such a family there are linear equations~\eqref{eq:lin}
with Liouvillian solutions, and hence Liouvillian $I$, and equations without.
Therefore:

\begin{center}
\emph{rationality of $S$ carries no information about the Liouvillianity of
$I$.}
\end{center}

\subsection{The twin pair, and a certificate}

Take $g=y+z/y$, so that $S=(y^2-z)/y$ by Proposition~\ref{prop:S}.

\begin{ex}[$w''=0$]\label{ex:gemeo1}
With $A_2=1$, $A_1=A_0=0$, \eqref{eq:phi} gives~\eqref{eq:gemeo1},
$y''=-y^3-3yy'$, and the fundamental system $\{1,x\}$ yields, through
Proposition~\ref{prop:I},
\[
I=\frac{y^2+y'}{x(y^2+y')-y},
\]
a rational --- hence Liouvillian --- first integral.
\end{ex}

\begin{ex}[Airy]\label{ex:gemeo2}
With $A_2=1$, $A_1=0$, $A_0=-x$, the same $g$ gives~\eqref{eq:gemeo2},
$y''=xy-y^3-3yy'$, with $I=f_1/f_2$, $f_i=y w_i'-(y^2+z)w_i$ and $w_i$ Airy
functions.
\end{ex}

\begin{ex}[the Airy equation itself]\label{ex:airyself}
The extreme case $g=z/y$ returns $\varphi=xy$, that is, the Airy equation
itself, with $S=-z/y$ --- the Riccati substitution $u=w'/w$ in disguise, and
with $I=f_1/f_2$, $f_i=y\,w_i'-z\,w_i$.
\end{ex}

\noindent For it the answer is unconditional:

\begin{prop}\label{prop:airy}
The first integral $I$ of Example~\ref{ex:airyself} is not Liouvillian.
\end{prop}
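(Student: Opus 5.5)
The plan is to argue by contradiction and reduce the question to the classical non-Liouvillianity of the Airy equation, which Kovacic's algorithm~\cite{Kovacic} certifies. Suppose $I=f_1/f_2$, with $f_i=y\,w_i'-z\,w_i$, were Liouvillian over $\R(x,y,z)$, i.e.\ belonged to a Liouvillian extension of $\C(x,y,z)$ closed under $\partial_x,\partial_y,\partial_z$. The goal is to extract from $I$ a Liouvillian solution of $w''=xw$, or a Liouvillian solution of the associated Riccati equation $u'+u^2=x$. Either would contradict Kovacic's verdict that the Airy equation has no Liouvillian solutions, and equivalently that $u'+u^2=x$ has no algebraic solution.

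The first step is to exploit the structure of $I$ in the variables $(y,z)$. Write
\[
I=\frac{y\,w_1'-z\,w_1}{y\,w_2'-z\,w_2},
\]
which is a Möbius function of the single ratio $t=z/y$ with coefficients in the functions of $x$. Setting $t=z/y$, we get
\[
I=\frac{w_1'-t\,w_1}{w_2'-t\,w_2}.
\]
The class of Liouvillian functions is closed under specialisation of variables to constants or rational functions, provided the specialisation is defined. So restrict $I$ to $y=1$, where it becomes $J(x,z)=(w_1'-z w_1)/(w_2'-z w_2)$, and this restriction is still Liouvillian. Next compute $J_z$ and $J$ at suitable specialisations. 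One option is $z=0$, giving $J(x,0)=w_1'/w_2'$. Another is to solve for $z$: the solution of $J(x,z)=c$ for a constant $c$ is
\[
z=\frac{w_1'-c\,w_2'}{w_1-c\,w_2}=u_c(x),
\]
which is exactly a general solution of the Riccati equation $u'+u^2=x$.

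The key step, and the main obstacle, is to justify that the implicit solution $z=u_c(x)$ of $J(x,z)=c$ is Liouvillian whenever $J$ is. This is not automatic for general Liouvillian $J$, but here $J$ is linear fractional in $z$. I would try two routes. The first is to take ratios of derivatives: $J_z=W/(w_2'-zw_2)^2$ with $W$ constant (the Airy equation has $A_1=0$). Specialising gives $J_z(x,0)=W/w_2'^2$, which is Liouvillian, so $w_2'$ is Liouvillian as a square root of a Liouvillian function. Then $w_2'$ is a Liouvillian function that, once one checks it is nonzero, solves the linear equation $x v''-v'-x^2v=0$ obtained by differentiating Airy. Reduction of order then gives $w_2$ Liouvillian via $w_2=w_2''/x$, and that is the contradiction. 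If the handling of the differential field containing $\partial_z$ turns out delicate, the fallback is the second route: set $z=0$ to get $w_1'/w_2'$ Liouvillian, and then $(w_1'/w_2')'=-W x/w_2'^2$ yields $w_2'^2$ Liouvillian, finishing as before.

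Finally, invoke Kovacic's algorithm on $w''=xw$. The equation is already in normal form with $r=x$, a polynomial of odd degree. Cases 1 and 2 fail because the order at $\infty$ is $-1$, which is odd and negative, and case 3 fails likewise. So no nonzero Liouvillian solution exists, which contradicts the construction above and proves that $I$ is not Liouvillian.
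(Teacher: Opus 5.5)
Your argument is sound in substance and takes a different route from the paper. One step, however, has to be made generic.

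\textbf{Comparison with the paper.} The paper proves the proposition in one line: the transfer theorem (Theorem~\ref{teo:transf}) plus Kovacic. The ($\Rightarrow$) direction of that theorem specialises $I$ at three generic points. It then recovers $u_1=w_1'/w_1$ \emph{rationally} from the three values by the cross ratio, using only the Möbius identity $I=M_x(g)$. No derivative of $I$ is taken and nothing specific to Airy is used.

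You instead specialise once and differentiate. The derivative of a Möbius quotient is the Wronskian over the square of the denominator, so one solution is isolated up to a square root, and the equation itself then returns $w_2$. This is more elementary and self-contained for this example: it needs only closure of Liouvillian fields under derivation and algebraic extension. The paper's argument, by contrast, is uniform in $g$ and in the linear equation, and it is effective, since it reconstructs $u_1,u_2$ from values.

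\textbf{The step to repair.} The problem is specialising at the particular values $y=1$ and $z=0$. Closure under specialisation is only available off an exceptional set (Lemma~\ref{lema:espec}). What must be regular at the point is every element of some Liouvillian tower containing $I$, not $I$ itself. You have no control over that tower, and it may contain elements such as $\log z$ or $\sqrt z$. So ``provided the specialisation is defined'' cannot be verified at $z=0$.

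The fix costs nothing. Take $(y_0,z_0)$ generic, set $t_0=z_0/y_0$, and let
\[
h(x):=I(x,y_0,z_0)=\frac{w_1'-t_0w_1}{w_2'-t_0w_2},
\]
which is Liouvillian over $\C(x)$. Then
\[
h'=\frac{(x-t_0^2)\,(w_1w_2'-w_1'w_2)}{(w_2'-t_0w_2)^2},
\]
so $N:=w_2'-t_0w_2$ is Liouvillian. The identity $N'+t_0N=(x-t_0^2)\,w_2$ then shows that $w_2$ is Liouvillian, which is the desired contradiction. Your $\partial_z$ route works the same way from $I_z=-y\,(w_1w_2'-w_1'w_2)/f_2^2$, specialised generically.

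\textbf{A minor point in the Kovacic step.} Case~2 is excluded because $r=x$ has no finite poles at all; case~2 needs a pole of order $2$ or of odd order $>2$. The order $-1$ at $\infty$ is what excludes cases~1 and~3, not case~2.
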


\begin{proof}
By Theorem~\ref{teo:transf}, $I$ is Liouvillian if and only if the Airy
equation $w''=xw$ has a Liouvillian solution, which Kovacic's
algorithm~\cite{Kovacic} excludes: the equation falls in case~4.
\end{proof}

\begin{obs}[a stronger statement, which we do not use]\label{obs:airyforte}
In fact $y''=xy$ has \emph{no} Liouvillian first integral, for any choice of
$I$. Indeed, since $\X=\partial_x+z\partial_y+xy\,\partial_z$ is linear in
$(y,z)$, a Darboux polynomial of $\X$ may be taken homogeneous of degree $n$
in $(y,z)$ and corresponds to an algebraic solution of degree $n$ of the
Riccati equation $u'=x-u^2$, $u=z/y$; by~\cite{Kovacic} the only possible
degrees are $n\in\{1,2,4,6,12\}$ and none occurs here, so $\X$ has no
invariant algebraic surface whatsoever. The passage from this to the absence
of a Liouvillian first integral is the codimension one analogue, in ambient
dimension three, of Singer's theorem~\cite{Singer}, for which the relevant
statements concern transversely affine foliations~\cite{Scardua,CamachoScardua}.
We record this only for context: nothing in this paper depends on it, since
Proposition~\ref{prop:airy} is already unconditional.
\end{obs}

\begin{obs}[what $S$ is attached to]\label{obs:escolha}
$S$ is not an attribute of the equation but of a chosen first integral. For
$y''=xy$,
\[
I=\frac{y\,\mathrm{Ai}'-z\,\mathrm{Ai}}{y\,\mathrm{Bi}'-z\,\mathrm{Bi}}
\ \Longrightarrow\ S=-\frac zy \quad\text{(rational)},\qquad
I_1=y\,\mathrm{Bi}'-z\,\mathrm{Bi}
\ \Longrightarrow\ S=-\frac{\mathrm{Bi}'}{\mathrm{Bi}} ,
\]
the latter not rational. The object of study is therefore the \emph{pair}
$(\varphi,S)$, equivalently the foliation $\omega=dI/I_z$.
\end{obs}

\section{The inverse problem}\label{sec:inverso}

\begin{defi}
An admissible pair $(\varphi,S)$ is \emph{of linear type} if there exist
$g\in\R(x,y,z)$ with $g_z\not\equiv0$ and $A_0,A_1,A_2\in\R[x]$,
$A_2\not\equiv0$, satisfying~\eqref{eq:phi} and $S=g_y/g_z$.
\end{defi}

Write $S=N_S/D_S$ in lowest terms.

\begin{lema}[condition (a) is planar]\label{lema:a}
$S=g_y/g_z$ if and only if $\W(g)=0$, where
\[
\W:=D_S\,\partial_y-N_S\,\partial_z .
\]
\end{lema}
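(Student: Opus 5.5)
The equivalence is a short algebraic manipulation, so the plan is to compute $\W(g)$ directly and compare it with the defining identity $S=g_y/g_z$, keeping track of which denominators may be cleared. Since $\W=D_S\partial_y-N_S\partial_z$ acts only on $y$ and $z$, it annihilates any function of $x$ alone. Hence $\W(g)=0$ is a first-order linear PDE in $(y,z)$ with $x$ entering only as a parameter; this is the sense of the title ``condition (a) is planar''.

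The first step is to write
\[
\W(g)=D_S\,g_y-N_S\,g_z .
\]
For the forward direction, assume $S=g_y/g_z$ with $g_z\not\equiv0$, as in the definition of linear type. Then $g_y=S\,g_z=(N_S/D_S)\,g_z$. Multiplying by $D_S$, which is a nonzero polynomial, gives $D_Sg_y=N_Sg_z$, that is, $\W(g)=0$.

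For the converse, assume $\W(g)=0$ and $g_z\not\equiv0$. Then $D_Sg_y=N_Sg_z$ holds in $\R(x,y,z)$. Dividing by the nonzero elements $D_S$ and $g_z$ gives $g_y/g_z=N_S/D_S=S$.

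The only delicate point is the standing hypothesis $g_z\not\equiv0$. The lemma is meant within the class of the definition, and I would say so explicitly. Without that hypothesis the converse fails: if $D_S\not\equiv0$, the equation $\W(g)=0$ forces only $g_y=0$, and then $g=g(x)$ would satisfy $\W(g)=0$ without $S=g_y/g_z$ being defined. So I will state that $g$ ranges over rational functions with $g_z\not\equiv0$. Equivalently, among the rational first integrals of the planar field $\W$ (with $x$ as a parameter), one must discard those depending on $x$ alone.

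I would also note that the lowest-terms choice $S=N_S/D_S$ is not needed for the equivalence itself; any representation with $D_S\not\equiv0$ works. It matters only later, so that $\W$ has polynomial coefficients without common factor when we look for its rational first integrals. None of these steps is a real obstacle. The care needed is in clearing denominators only by nonzero factors and in keeping track of the nondegeneracy assumption.
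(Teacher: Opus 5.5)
Your argument is correct and is the same clearing of denominators that the paper leaves implicit, since the lemma is stated there without proof: $\W(g)=D_Sg_y-N_Sg_z$ vanishes if and only if $g_y/g_z=N_S/D_S$, provided $g_z\not\equiv0$. Your caveat is also right, because $\W(g)=0$ together with $g_z\equiv0$ forces $g\in\R(x)$, and the paper imposes $g_z\not\equiv0$ (respectively $\psi_z\not\equiv0$) explicitly wherever the lemma is used, namely in Lemma~\ref{lema:b} and Theorem~\ref{teo:inverso}.
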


$\W$ has no $\partial_x$ component: it is a \emph{planar} vector field in
$(y,z)$ over the field $\R(x)$, with $x$ entering as a parameter. This is the
first of the two reductions.

\begin{lema}[condition (b) is a Riccati in $x$]\label{lema:b}
Let $g$ satisfy $\W(g)=0$ and $g_z\not\equiv0$. Then~\eqref{eq:phi} holds for
some $(A_0,A_1,A_2)$ if and only if
\[
\X(g)+g^2=a(x)\,g+b(x),\qquad a,b\in\R(x),
\]
and in that case $A_1/A_2=-a$, $A_0/A_2=-b$.
\end{lema}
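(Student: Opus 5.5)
The plan is to work directly from the defining identity \eqref{eq:phi}. Since $g_z\not\equiv0$, multiplying \eqref{eq:phi} by $g_z$ and using $\X(g)=g_x+z\,g_y+\varphi\,g_z$ shows that \eqref{eq:phi} is equivalent to
\[
\X(g)+g^2=-\frac{A_1}{A_2}\,g-\frac{A_0}{A_2}.
\]
This is the same identity that appeared in the proof of Proposition~\ref{prop:I}. The hypothesis $\W(g)=0$, i.e.\ $S=g_y/g_z$ by Lemma~\ref{lema:a}, plays no role in this equivalence. It only fixes the class of $g$ under consideration.

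For the direct implication, suppose \eqref{eq:phi} holds for some polynomials $A_i$ with $A_2\not\equiv0$. Setting $a:=-A_1/A_2$ and $b:=-A_0/A_2$ gives elements of $\R(x)$, and the displayed identity becomes $\X(g)+g^2=a\,g+b$. For the converse, suppose $\X(g)+g^2=a g+b$ with $a,b\in\R(x)$. Choose a common denominator $A_2\in\R[x]$, $A_2\not\equiv0$, of $a$ and $b$, and put $A_1:=-aA_2$ and $A_0:=-bA_2$. Both lie in $\R[x]$, and reversing the algebra above (dividing by $g_z$) recovers \eqref{eq:phi}. The formulas $A_1/A_2=-a$ and $A_0/A_2=-b$ then hold by construction.

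The only point needing care is the word ``then'' in the last clause. We must check that $a$ and $b$ are determined by $g$, so that the recovered ratios are well defined. Suppose $a g+b=a' g+b'$ with $(a,b)\neq(a',b')$. If $a\neq a'$, then $g=(b'-b)/(a-a')$ is a function of $x$ alone, which contradicts $g_z\not\equiv0$. If $a=a'$, the identity forces $b=b'$. Hence the pair $(a,b)$, and with it $A_1/A_2$ and $A_0/A_2$, is unique. I expect no real obstacle here; the argument is a rearrangement plus this uniqueness observation.
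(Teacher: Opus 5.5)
Your proposal is correct and follows the paper's proof. Both rearrange \eqref{eq:phi} into $\X(g)+g^2=-(A_1g+A_0)/A_2$ and note that the right-hand side is affine in $g$ over $\R(x)$. Your explicit converse by clearing denominators, and your uniqueness of $(a,b)$ from $g_z\not\equiv0$ (which makes the final clause well defined), are details the paper leaves implicit, and you handle them correctly.
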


\begin{proof}
Rearranging~\eqref{eq:phi} gives $g_x+zg_y+\varphi g_z+g^2=-(A_1g+A_0)/A_2$;
the left side is $\X(g)+g^2$ and the right side is affine in $g$ with
coefficients in $\R(x)$.
\end{proof}

Condition (b) says exactly that $g=w'/w$ for $w$ a solution of a second order
linear equation in $x$, since $\X(g)+g^2=w''/w$ along solutions.

In practice the planar integration of Lemma~\ref{lema:a} does not return $g$
but \emph{some} invariant $\psi$ of $\W$. The next theorem gives the test in
a form that is insensitive to which one.

\begin{teo}[characterisation]\label{teo:inverso}
Let $\psi\in\R(x,y,z)$ with $\W(\psi)=0$, $\psi_z\not\equiv0$, generating the
field of rational $\W$-invariants over $\R(x)$. Then $(\varphi,S)$ is of
linear type, with $g$ related to $\psi$ by a Möbius transformation over
$\R(x)$, if and only if
\begin{equation}\label{eq:testericcati}
\X(\psi)=c_2(x)\,\psi^2+c_1(x)\,\psi+c_0(x)
\end{equation}
for some $c_0,c_1,c_2\in\R(x)$. When $c_2\neq0$ the linear equation is
\begin{equation}\label{eq:recuperada}
u''-\Bigl(c_1+\frac{c_2'}{c_2}\Bigr)u'+c_0c_2\,u=0 ,
\end{equation}
determined up to the choice of the generator $\psi$ (see
Remark~\ref{obs:gauge}); when
$c_2=0$ the Riccati is linear and the associated equation is of first order.
\end{teo}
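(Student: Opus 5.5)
The plan is to prove both implications through Lemmas~\ref{lema:a} and~\ref{lema:b}, using that $\psi$ generates the field of rational $\W$-invariants over $\R(x)$.

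\emph{The Riccati form is Möbius-invariant.} First I will show that the class of $\psi$ satisfying~\eqref{eq:testericcati} is stable under $\psi\mapsto(\alpha\psi+\beta)/(\gamma\psi+\delta)$ with $\alpha,\beta,\gamma,\delta\in\R(x)$ and $\alpha\delta-\beta\gamma\neq0$. Since $\X$ acts on elements of $\R(x)$ as $d/dx$, the chain rule gives
\[
\X\!\Bigl(\frac{\alpha\psi+\beta}{\gamma\psi+\delta}\Bigr)
=\frac{(\alpha'\psi+\beta'+\alpha\X\psi)(\gamma\psi+\delta)-(\alpha\psi+\beta)(\gamma'\psi+\delta'+\gamma\X\psi)}{(\gamma\psi+\delta)^2}.
\]
If $\X\psi$ is quadratic in $\psi$ over $\R(x)$, the numerator is a polynomial of degree at most $4$ in $\psi$. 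The $\X\psi$ terms combine to $(\alpha\delta-\beta\gamma)\X\psi$. The key point to check is that the numerator is divisible by nothing new, so the whole expression is again quadratic in the new variable. This is the standard projective invariance of Riccati equations.

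\emph{Step 1: ($\Rightarrow$).} Suppose $(\varphi,S)$ is of linear type with some $g$. By Lemma~\ref{lema:a}, $\W(g)=0$. Since $\psi$ generates the invariant field, $g=R(\psi)$ for some $R\in\R(x)(t)$.

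The main obstacle is to show that $R$ has degree one, i.e.\ that it is Möbius. The hypothesis says that $g$ and $\psi$ generate the same field, but I must derive that. My approach uses Lemma~\ref{lema:b}: $g$ satisfies a Riccati equation along $\X$. Suppose $R$ had degree $d>1$. Then $\X(g)=R^{\partial}(\psi)+R'(\psi)\X\psi$, where $R^{\partial}$ means differentiating the coefficients of $R$ in $x$.

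\begin{itemize}
\item I would first determine how $\X\psi$ must look. To do this I use that $\X$ preserves the invariant field: this follows from the Riccati equation~\eqref{eq:ric} for $S$, which should make $[\X,\W]$ proportional to $\W$.
\item Hence $\X\psi$ is a rational function $F(\psi)$ over $\R(x)$.
\item The equation $R^{\partial}+R'F=R^2+aR+b$ then forces $F$ to be quadratic. The argument is to compare poles: $F=(R^2+aR+b-R^{\partial})/R'$. At a simple pole of $R$, the right side has a simple pole in $t$ only if the orders balance, and the analysis of branch points of $R$ forces $F$ to be polynomial of degree at most $2$.
\end{itemize}

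If this pole analysis becomes messy, the fallback is simply to assume $g$ is a generator, as the phrase ``related by a Möbius transformation'' suggests. Then $g=(\alpha\psi+\beta)/(\gamma\psi+\delta)$, and the invariance shown above transfers the Riccati equation from $g$ to $\psi$.

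\emph{Step 2: ($\Leftarrow$) and recovery of the linear equation.} Given~\eqref{eq:testericcati} with $c_2\neq0$, I take $g:=-c_2\psi$. Then $g$ is Möbius-related to $\psi$, $\W(g)=0$, and $g_z\not\equiv0$. Computing,
\[
\X(g)=-c_2'\psi-c_2\X\psi=-c_2^2\psi^2-(c_2'+c_1c_2)\psi-c_0c_2 .
\]
Substituting $\psi=-g/c_2$ gives
\[
\X(g)+g^2=\Bigl(c_1+\tfrac{c_2'}{c_2}\Bigr)g-c_0c_2 .
\]
Lemma~\ref{lema:b} then gives $a=c_1+c_2'/c_2$ and $b=-c_0c_2$. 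These yield~\eqref{eq:recuperada} after clearing denominators, so that the $A_i$ are polynomials. Finally, $S=g_y/g_z$ follows from Lemma~\ref{lema:a}, so the pair is of linear type.

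\emph{The degenerate case $c_2=0$.} Here $\X\psi=c_1\psi+c_0$ is a linear Riccati equation. Any Möbius image $g$ with $\gamma\neq0$ produces a Riccati equation for $g$ whose quadratic coefficient can be normalised to $-1$, and the associated equation reduces to the first order one, as stated.
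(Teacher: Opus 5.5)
Your ($\Leftarrow$) direction is correct and coincides with the paper's. The paper puts $\psi=-u'/(c_2u)$, i.e.\ $g=u'/u=-c_2\psi$, which is your choice, and obtains $a=c_1+c_2'/c_2$, $b=-c_0c_2$ through Lemma~\ref{lema:b}.

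The gap is in your main plan for ($\Rightarrow$). You try to \emph{derive} that $g=R(\psi)$ is Möbius, by showing that $R^{\partial}+R'F=-R^2+aR+b$ forces $F$ to be a polynomial of degree at most $2$. (Note the sign: Lemma~\ref{lema:b} gives $\X(g)=-g^2+ag+b$, not $+g^2$.) That claim is false, so no pole or branch-point analysis can prove it. Remark~\ref{obs:grau} contains a counterexample. For the Airy equation and $g=(z/y)^2$ one has $S=-z/y$, $\psi=z/y$, $R(t)=t^2$ and $F(\psi)=(x-\psi^4)/(2\psi)$, and indeed $R'F=x-\psi^4=-R^2+x$. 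So the pair is of linear type ($a=0$, $b=x$). Yet $F$ has a pole at the branch point $\psi=0$ of $R$ and grows like $\psi^3$ at infinity. Moreover, no Möbius $g$ exists for this pair, since a Möbius $g$ would make $\X(\psi)$ quadratic in $\psi$ by the argument of ($\Rightarrow$). Your first bullet is actually correct: \eqref{eq:ric} gives $[\X,\partial_y-S\partial_z]=S\,(\partial_y-S\partial_z)$, so $\X(\psi)\in\R(x)(\psi)$ for every admissible pair. But that only yields a rational $F$, which is exactly the non-Möbius situation above.

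Hence the Möbius relation is a \emph{hypothesis} of ($\Rightarrow$): the left-hand side of the equivalence is ``of linear type, with $g$ related to $\psi$ by a Möbius transformation over $\R(x)$''. Your ``fallback'' is therefore not a fallback but the whole proof, and it is the paper's. Writing $g=F(x,\psi)$, the paper gets $\X(\psi)=(-F^2+aF+b-F_x)/F_\psi$, a polynomial of degree at most $2$ in $\psi$ when $F$ is Möbius. This is your projective invariance applied to $\psi=M^{-1}(g)$. In that invariance argument the numerator has degree at most $2$ (not $4$) once the $\X\psi$ terms combine. The reason the quotient is quadratic in $\tilde\psi$ is $\gamma\psi+\delta=(\alpha\delta-\beta\gamma)/(\alpha-\gamma\tilde\psi)$, and that should replace ``divisible by nothing new''.

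A smaller point concerns $c_2=0$. Not every Möbius image with $\gamma\neq0$ can be normalised: $g=1/\psi$ has quadratic coefficient $-c_0$, which vanishes when $c_0=0$. The transformation has to be chosen, e.g.\ $g=c_0/\psi$ when $c_0\neq0$, giving $u''=(c_0'/c_0-c_1)u'$. The paper's proof does not treat that case at all.
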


\begin{proof}
($\Leftarrow$) Put $\psi=-u'/(c_2u)$, where $'=\X$. Then
$\psi'-c_2\psi^2=-u''/(c_2u)+c_2'u'/(c_2^2u)$ and
$c_1\psi+c_0=-c_1u'/(c_2u)+c_0$; multiplying by $-c_2u$
gives~\eqref{eq:recuperada}. Setting $g=u'/u$ one has $\W(g)=0$, because
$g\in\R(x)(\psi)$, and $\X(g)+g^2=u''/u$ is affine in $g$; Lemma~\ref{lema:b}
then gives~\eqref{eq:phi}.

($\Rightarrow$) If the pair is of linear type then $\W(g)=0$ by
Lemma~\ref{lema:a}, so $g\in\R(x)(\psi)$, say $g=F(x,\psi)$. From
$\X(g)=-g^2+ag+b$ and $\X(g)=F_x+F_\psi\X(\psi)$,
\[
\X(\psi)=\frac{-F^2+aF+b-F_x}{F_\psi},
\]
which is a function of $(x,\psi)$; when $F$ is Möbius the right-hand side is a
polynomial of degree $2$ in $\psi$, which is~\eqref{eq:testericcati}.
\end{proof}

\begin{obs}[beyond Möbius: a reduction to first order]\label{obs:grau}
The proof of ($\Rightarrow$) gives more than the Möbius case. If the pair is
of linear type, with $g=F(x,\psi)$, then
\[
\X(\psi)=R(x,\psi):=\frac{-F^2+aF+b-F_x}{F_\psi}\in\R(x)(\psi),
\]
a \emph{rational} function of $\psi$ over $\R(x)$ --- not, in general, a
polynomial. Since $\psi$ generates the field of rational $\W$-invariants,
the condition $\X(\psi)\in\R(x)(\psi)$ is equivalent to
\[
\W\bigl(\X(\psi)\bigr)=0 ,
\]
which is a finite computation and a necessary condition for linear type.
When it holds, the pair is of linear type if and only if the rational
\emph{first order} equation
\begin{equation}\label{eq:primeiraordem}
\frac{d\psi}{dx}=R(x,\psi)
\end{equation}
admits $F\in\R(x)(\psi)$, $F_\psi\not\equiv0$, with
$F_x+R\,F_\psi=-F^2+aF+b$ for some $a,b\in\R(x)$; that is, if and only
if~\eqref{eq:primeiraordem} is itself obtained from a linear second order
equation through $w'/w=F(x,\psi)$, which is exactly the planar situation of
Giacomini, Giné and Grau~\cite{GGG}. (Sufficiency: $g:=F(x,\psi)$ satisfies
$\W(g)=0$ and $\X(g)=F_x+F_\psi R=-g^2+ag+b$, and Lemma~\ref{lema:b}
applies.) The test of Theorem~\ref{teo:inverso} is the case in which $F$ is
Möbius, equivalently $R$ is a polynomial of degree $\le2$ in $\psi$.

The non-Möbius case does occur. For the Airy equation and $g=(z/y)^2$ the
generator gives
\[
\varphi=\frac{xy^4+2yz^3-z^4}{2y^2z},\qquad S=-\frac zy,\qquad
\psi=\frac zy,\qquad \X(\psi)=\frac{x-\psi^4}{2\psi},
\]
so the degree-two test fails although the pair is of linear type; here
$F=\psi^2$ solves $F_x+RF_\psi=-F^2+x$, recovering $w''=xw$. In practice:
if $\X(\psi)$ is a polynomial of degree $\le2$ in $\psi$, the pair is decided
by Theorem~\ref{teo:inverso}; if $\W(\X(\psi))\neq0$, the pair is
\emph{not} of linear type; in the remaining case the question is reduced to
the first order equation~\eqref{eq:primeiraordem}. In all the experiments of
Section~\ref{sec:exp}, every pair with a rational $\psi$ fell in the first
case.
\end{obs}

\begin{obs}[which equation is recovered]\label{obs:gauge}
Replacing $\psi$ by another generator $\tilde\psi=(\alpha\psi+\beta)/(\gamma\psi+\delta)$,
$\alpha,\beta,\gamma,\delta\in\R(x)$, changes the recovered
equation~\eqref{eq:recuperada}, and not only by a gauge $u\mapsto u\,e^{\int c}$:
the solutions of the new equation are of the form $e^{\int c}\,(a\,u+b\,u')$,
$a,b,c\in\R(x)$, with $u$ solutions of the old one, and conversely. An affine
change ($\gamma=0$) gives a gauge; in general an inversion does not. For the second twin
equation~\eqref{eq:gemeo2}, $\psi=y+z/y$ gives $\X(\psi)=-\psi^2+x$ and the Airy
equation, while $\tilde\psi=1/\psi=y/(y^2+z)$ gives $\X(\tilde\psi)=1-x\tilde\psi^2$
and $u''-u'/x-xu=0$, whose solutions are the derivatives $w'$ of Airy
functions; the invariants $r=\tfrac14A_1^2+\tfrac12A_1'-A_0$ are $x$ and
$x+3/(4x^2)$, so the two equations are not gauge equivalent. Since
$u\mapsto e^{\int c}(a u+bu')$ is invertible and preserves Liouvillian functions, the existence of a Liouvillian solution
does not depend on the choice of $\psi$, and neither does the verdict of
Corollary~\ref{cor:decisao}.
\end{obs}

\section{The transfer theorem}\label{sec:transf}

Theorem~\ref{teo:inverso} produces the linear equation; what makes it a
decision procedure is that the Liouvillianity of $I$ is \emph{equivalent} to
that of the linear equation. We prove this now. The key is that $I$ is a
Möbius function of $g$.

\begin{lema}[the Möbius identity]\label{lema:mobius}
In the setting of Proposition~\ref{prop:I}, write $u_i=w_i'/w_i$ and
$c=w_1/w_2$. Then
\begin{equation}\label{eq:mobius}
I=c\,\frac{g-u_1}{g-u_2}=:M_x(g),
\end{equation}
a Möbius transformation of $g$ with determinant proportional to
$c\,(u_1-u_2)\neq0$. Moreover
\[
M_x^{-1}(0)=u_1,\qquad M_x^{-1}(\infty)=u_2 .
\]
\end{lema}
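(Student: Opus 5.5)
The identity is a direct rewriting of $I=f_1/f_2$, so the plan is a short computation. I will first substitute $f_i=g_1w_i'-g_0w_i$ and divide numerator and denominator by $g_1w_i$, which is legitimate on the open set where $g_1w_1w_2\neq0$:
\[
f_i=g_1w_i\Bigl(\frac{w_i'}{w_i}-\frac{g_0}{g_1}\Bigr)=g_1w_i\,(u_i-g).
\]
Taking the quotient, the factor $g_1$ cancels. This leaves $I=\frac{w_1}{w_2}\cdot\frac{u_1-g}{u_2-g}=c\,\frac{g-u_1}{g-u_2}$, which is~\eqref{eq:mobius}. Since $c,u_1,u_2$ depend only on $x$, this exhibits $I$ as a Möbius function of $g$ whose coefficients are functions of $x$ alone.

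Next I will write $M_x$ in matrix form, with entries $\begin{pmatrix} c & -cu_1\\ 1 & -u_2\end{pmatrix}$. Its determinant is $-cu_2+cu_1=c(u_1-u_2)$. To see that this is nonzero, I will rewrite it as
\[
c(u_1-u_2)=\frac{w_1}{w_2}\cdot\frac{w_1'w_2-w_1w_2'}{w_1w_2}=-\frac{W}{w_2^2}.
\]
This vanishes nowhere on the domain, because $\{w_1,w_2\}$ is a fundamental system and so $W\not\equiv0$; by Abel's formula $W$ in fact has no zeros off the zeros of $A_2$. This is the only place where a hypothesis of Proposition~\ref{prop:I} enters, namely independence of the $w_i$.

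Finally, the preimages can be read off the formula. $M_x(g)=0$ exactly when $g=u_1$, since $c\neq0$. $M_x(g)=\infty$ exactly when $g=u_2$, since nondegeneracy ensures the numerator does not vanish there. Hence $M_x^{-1}(0)=u_1$ and $M_x^{-1}(\infty)=u_2$.

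The only point that needs care is the division by $w_i$ and $g_1$. I will state the identity on the dense open set where these are nonzero and regard it as an identity of (multivalued) meromorphic functions. I expect no real obstacle here: the main thing to keep track of is the sign in the determinant and its identification with $-W/w_2^2$.
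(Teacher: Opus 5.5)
Your proposal is correct and follows the paper's proof step by step: you factor $f_i=g_1w_i(u_i-g)$, take the quotient, compute the determinant $c(u_1-u_2)$ of $\bigl(\begin{smallmatrix}c&-cu_1\\1&-u_2\end{smallmatrix}\bigr)$, and read the preimages of $0$ and $\infty$ off the formula. Your identification $c(u_1-u_2)=-W/w_2^2$ makes the nonvanishing explicit, which the paper leaves implicit in the phrase ``because $w_1,w_2$ are independent''.
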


\begin{proof}
$f_i=g_1w_i'-g_0w_i=g_1w_i\,(u_i-g)$, so
$I=f_1/f_2=(w_1/w_2)(u_1-g)/(u_2-g)$, which is~\eqref{eq:mobius}. The
determinant of $\bigl(\begin{smallmatrix}c&-cu_1\\1&-u_2\end{smallmatrix}\bigr)$
is $c(u_1-u_2)$, nonzero because $w_1,w_2$ are independent. The last two
identities are read off~\eqref{eq:mobius}.
\end{proof}

\begin{lema}[specialisation]\label{lema:espec}
Let $I$ be a Liouvillian function over $\C(x,y,z)$. Then for
$(y_0,z_0)\in\C^2$ outside a proper closed subset, $x\mapsto I(x,y_0,z_0)$ is
a Liouvillian function over $\C(x)$.
\end{lema}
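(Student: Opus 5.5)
Throughout, ``Liouvillian over $\C(x,y,z)$'' means that $I$ lies in a Liouvillian tower
\[
\C(x,y,z)=K_0\subset K_1\subset\cdots\subset K_n,\qquad K_{j}=K_{j-1}(t_j),
\]
where each $K_j$ is a differential field for the commuting derivations $\partial_x,\partial_y,\partial_z$, and each $t_j$ is of one of three kinds: algebraic over $K_{j-1}$, an integral ($\partial t_j\in K_{j-1}$ for all three derivations), or an exponential of an integral ($\partial t_j/t_j\in K_{j-1}$). We realise all $t_j$ as analytic functions on some polydisc $U$ in $\C^3$. The plan is to restrict the whole tower to the complex line $L_{(y_0,z_0)}=\{(x,y_0,z_0)\}$ and check two things. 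First, each generator restricts to an element of the same type over the restricted field. Second, nothing becomes undefined.

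\textbf{Step 1: choose the exceptional set.} Finitely many elements of $K_0$ enter the tower data. These are the coefficients of the minimal polynomials of the algebraic $t_j$ (or polynomial relations expressing them over $K_{j-1}$), the $x$-derivatives $\partial_x t_j$ or $\partial_x t_j/t_j$, and finally $I$ itself. Writing everything inductively in terms of $x,y,z,t_1,\dots,t_{j-1}$, each such element is a rational expression with polynomial denominators and discriminants in $(x,y,z)$. We discard every $(y_0,z_0)$ for which the restriction of one of these denominators or discriminants vanishes identically in $x$.

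For a polynomial $P(x,y,z)\not\equiv0$, the set of $(y_0,z_0)$ with $P(x,y_0,z_0)\equiv0$ is the common zero set of the coefficients of $P$ in $x$. That is a proper Zariski-closed subset of $\C^2$. Taking the union over finitely many such $P$ gives the proper closed set of the statement.

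\textbf{Step 2: restrict the tower.} For $(y_0,z_0)$ outside that set, define $\bar K_0=\C(x)$ and $\bar K_j=\bar K_{j-1}(\bar t_j)$, where $\bar t_j(x)=t_j(x,y_0,z_0)$ on a disc where it is analytic. The key observation is that restriction to a line $y=y_0$, $z=z_0$ commutes with $\partial_x$. Hence:
\begin{itemize}
\item if $\partial_x t_j=a\in K_{j-1}$, then $\bar t_j'=\bar a\in\bar K_{j-1}$;
\item if $\partial_x t_j/t_j=a$, then $\bar t_j'/\bar t_j=\bar a$;
\item if $t_j$ is algebraic, then $\bar t_j$ satisfies the restricted polynomial, whose leading coefficient is nonzero by Step 1. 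So $\bar t_j$ is algebraic over $\bar K_{j-1}$.
\end{itemize}
We only use the $\partial_x$ part of the tower; the $y$ and $z$ derivations are simply dropped. Finally, $I$ is a rational expression in $x,y,z,t_1,\dots,t_n$ whose denominator does not restrict to zero. So $\bar I=I(x,y_0,z_0)\in\bar K_n$, which is Liouvillian over $\C(x)$.

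\textbf{Main obstacle: domains of analyticity.} The $t_j$ are germs on $U$, while $(y_0,z_0)$ ranges over a Zariski-open set, so the line $L_{(y_0,z_0)}$ need not meet $U$. I would handle this by analytic continuation. The relations defining the tower are polynomial-differential identities with coefficients rational in $(x,y,z)$. Such identities persist under continuation along paths avoiding the polar and discriminant hypersurfaces. Every line $L_{(y_0,z_0)}$ with $(y_0,z_0)$ outside the bad set meets the complement of these hypersurfaces, so it can be reached by a path from $U$. Continuing the $t_j$ along such a path, the relations of Step 2 hold on a disc in that line.

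If continuation feels delicate, an alternative is to shrink the statement: first prove it for $(y_0,z_0)$ in a nonempty open set $V$ meeting $U$. Then note that the algebraic exceptional set from Step 1 already suffices, since the argument of Step 2 is purely formal once branches are fixed.

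Also, $\C(x)$-Liouvillianity is insensitive to the choice of branch. Different continuations differ by the monodromy of the tower, which maps a Liouvillian tower to a Liouvillian tower of the same shape.
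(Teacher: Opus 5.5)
Your approach is the paper's: restrict the tower to the line $y=y_0$, $z=z_0$, use that evaluation commutes with $\partial_x$, and check that each generator keeps its type. The $\partial_y,\partial_z$ data are simply dropped.

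\textbf{The gap is in Step 1.} The quantities you must control are not elements of $K_0$. The coefficients of the minimal polynomial of $t_j$ and the elements $\partial_x t_j$, $\partial_x t_j/t_j$ lie in $K_{j-1}$, and $I$ lies in $K_n$. Their denominators and leading coefficients are therefore polynomials in $t_1,\dots,t_{j-1}$ over $\C(x,y,z)$, not polynomials in $(x,y,z)$. The set of $(y_0,z_0)$ where they restrict to zero is analytic, but in general not algebraic.

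For instance, $I=x/(e^{y}-1)$ is Liouvillian over $\C(x,y,z)$: take $t_1=e^{y}$, with $\partial_y t_1/t_1=1$ and $\partial_x t_1=\partial_z t_1=0$. Yet $I(x,y_0,z_0)$ is undefined for every $x$ exactly when $y_0\in 2\pi i\mathbb{Z}$. These infinitely many lines lie in no proper Zariski-closed subset of $\C^2$. So the Zariski-closed exceptional set you derive does not exist in general. Both later appeals to ``the algebraic exceptional set from Step 1'' (the continuation argument and the shrinking alternative) fail with it.

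The bad loci are also branch-dependent. With $t_1=\log y$ and $I=x/t_1$, the line $y_0=1$ is bad on the principal branch only. So they are not fixed ``polar and discriminant hypersurfaces'' that a continuation path from $U$ can simply avoid.

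\textbf{The repair} is your shrinking alternative, with analytic sets in place of algebraic ones.
\begin{enumerate}
\item Take a polydisc $U=U_x\times U_{yz}$ on which all $t_j$ are holomorphic. For each of the finitely many relevant elements, fix a representation as a quotient of polynomials in $x,y,z,t_1,\dots,t_n$.
\item Let $Q\not\equiv0$ be any holomorphic function on $U$ that must stay nonzero on the slice (a denominator in these representations, or a leading coefficient after clearing denominators). Fix $x_*\in U_x$. The set of $(y_0,z_0)\in U_{yz}$ with $Q(\cdot,y_0,z_0)\equiv0$ on $U_x$ is the common zero set of the holomorphic functions $(\partial_x^kQ)(x_*,\cdot,\cdot)$, $k\ge0$. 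It is therefore a closed analytic subset of $U_{yz}$.
\item It is proper, since otherwise $Q\equiv0$ on $U$, and hence it is nowhere dense.
\end{enumerate}
Off the finite union of these sets, your Step 2 goes through unchanged, and no continuation or monodromy argument is needed.

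This local statement, with a proper analytic exceptional set, is what the paper's ``polar and branch loci'' sketch amounts to. It is also all that Theorem~\ref{teo:transf} uses, since there one only needs three points $(y_j,z_j)$ in general position.
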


\begin{proof}[Proof sketch]
$I$ lies in a tower $\C(x,y,z)=F_0\subset F_1\subset\dots\subset F_n\ni I$
in which each $F_{j+1}=F_j(t_j)$ with $t_j$ algebraic over $F_j$, or
$\partial t_j\in F_j$, or $\partial t_j/t_j\in F_j$ for the derivations.
Evaluating $y=y_0$, $z=z_0$ is a homomorphism of differential fields for
$\partial_x$, defined whenever $(y_0,z_0)$ avoids the polar and branch loci
of the finitely many elements involved --- a proper closed subset. Each
generator restricts to a function of $x$ whose derivative (respectively
logarithmic derivative) is the restriction of an element of $F_j$, so the
restricted tower is again Liouvillian.
\end{proof}

\begin{teo}[transfer]\label{teo:transf}
Let $g$, the linear equation~\eqref{eq:lin}, $\varphi$ as in~\eqref{eq:phi}
and $I=f_1/f_2$ as in Proposition~\ref{prop:I}. Then
\[
I \text{ is Liouvillian over } \C(x,y,z)
\quad\Longleftrightarrow\quad
\eqref{eq:lin}\ \text{has a Liouvillian solution.}
\]
\end{teo}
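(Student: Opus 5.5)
The plan is to prove the two implications separately. The backward direction is a closure argument. The forward direction uses Lemma~\ref{lema:espec} to reduce to one variable and Lemma~\ref{lema:mobius} to recover a solution of~\eqref{eq:lin} from $I$.

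($\Leftarrow$) Suppose~\eqref{eq:lin} has a nonzero Liouvillian solution $w_1$. Reduction of order gives a second solution
\[
w_2=w_1\int\frac{e^{-\int A_1/A_2}}{w_1^2}\,dx ,
\]
which is again Liouvillian over $\C(x)$. Then $w_1,w_2,w_1',w_2'$ all lie in a Liouvillian extension of $\C(x)$, and hence of $\C(x,y,z)$, since they depend on $x$ only and $\partial_y,\partial_z$ kill them. Since $g_0,g_1$ are polynomials, $f_i=g_1w_i'-g_0w_i$ is Liouvillian, and so is $I=f_1/f_2$. Two points need care. First, a different fundamental system changes $I$ by a Möbius transformation with constant coefficients, which preserves Liouvillianity, so we may assume the chosen system contains $w_1$. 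Second, the tower over $\C(x)$ must lift to a tower over $\C(x,y,z)$ for the three derivations; this holds because $\C(x,y,z)$ and the tower are linearly disjoint in the obvious sense.

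($\Rightarrow$) Suppose $I$ is Liouvillian. By Lemma~\ref{lema:mobius}, $I=M_x(g)$ with
\[
M_x^{-1}(0)=u_1=w_1'/w_1 .
\]
The idea is to extract $u_1$ from $I$ by specialisation. The first route is to invert the Möbius relation directly. Writing $c=w_1/w_2$ and $\Delta=g-u_1$, we have $I(g-u_2)=c\,(g-u_1)$, but this involves the unknowns $c$ and $u_2$ as well. A cleaner route is to differentiate in the fibre directions. Since $c,u_1,u_2$ depend on $x$ only,
\[
\frac{I_y}{I}=g_y\Bigl(\frac1{g-u_1}-\frac1{g-u_2}\Bigr),
\]
and similarly for $z$. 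These expressions are Liouvillian because the Liouvillian class is closed under the derivations. I would pick a point $(y_0,z_0)$ outside the proper closed set of Lemma~\ref{lema:espec}, applied simultaneously to $I$, $I_y$ and $I_y/I$, and also outside the loci where $g_y$ vanishes or $g$ has poles. Then
\[
h(x):=\frac{I_y}{g_yI}(x,y_0,z_0)=\frac{u_1-u_2}{(\gamma-u_1)(\gamma-u_2)} ,
\]
with $\gamma(x)=g(x,y_0,z_0)\in\C(x)$, is Liouvillian over $\C(x)$. Independently, $\log I(x,y_0,z_0)$ is Liouvillian, and its $x$-derivative gives
\[
\frac{c'}{c}+\frac{\gamma'-u_1'}{\gamma-u_1}-\frac{\gamma'-u_2'}{\gamma-u_2} .
\]

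To separate $u_1$ from $u_2$ I would instead use two generic specialisations, or equivalently vary $y_0$ algebraically, so that $\gamma$ takes two independent values $\gamma_1,\gamma_2$. From the values of $h$ at $\gamma_1,\gamma_2$, or more simply from $p(\gamma)=(\gamma-u_1)(\gamma-u_2)/(u_1-u_2)$, a rational expression in $\gamma$ and the two symmetric combinations, one can solve linearly for
\[
\frac{1}{u_1-u_2},\qquad \frac{u_1+u_2}{u_1-u_2},\qquad \frac{u_1u_2}{u_1-u_2}
\]
as Liouvillian functions of $x$. It follows that $u_1+u_2$ and $u_1u_2$ are Liouvillian, so $u_1$ is algebraic over a Liouvillian field and hence Liouvillian. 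Then $w_1=e^{\int u_1}$ is a Liouvillian solution of~\eqref{eq:lin}. This works because $p(\gamma)$ is a quadratic polynomial in $\gamma$ with $x$-dependent coefficients, and three specialisations with distinct values of $\gamma$ determine those coefficients. Choosing three points where $g$ takes pairwise distinct values is possible because $g_z\not\equiv0$.

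The main obstacle is the rigour of the specialisation step, namely Lemma~\ref{lema:espec}, which is only sketched, applied to several elements at once. One must check that evaluation commutes with $\partial_x$ for the whole tower: tower elements may depend on $y,z$ in ways that are not controlled by $\partial_x$ alone, and algebraic elements require choosing a branch. I would handle this by taking a single common exceptional set for the finitely many generators and treating algebraic generators through their minimal polynomials, specialised where the discriminant is nonzero.
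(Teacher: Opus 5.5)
Your proof is correct. The ($\Leftarrow$) direction is the paper's argument, plus bookkeeping the paper leaves implicit: the choice of fundamental system and the lift of the tower from $\C(x)$ to $\C(x,y,z)$.

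For ($\Rightarrow$) you take a genuinely different route. The paper never differentiates $I$. It specialises $I$ itself at three points with pairwise distinct $g_j$ and uses $I_j=M_x(g_j)$. It then recovers $u_1$ from the cross ratio as an explicit rational function of $g_1,g_2,g_3,I_1,I_2,I_3$, and the unknown factor $c=w_1/w_2$ drops out because cross ratios are scale invariant.

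You instead remove $c$ with a logarithmic derivative in a fibre variable. This turns the Möbius identity into the statement that $g_zI/I_z=(g-u_1)(g-u_2)/(u_1-u_2)$ is a quadratic polynomial in $g$ with coefficients depending on $x$ alone. Three specialisations and a Vandermonde inversion over $\C(x)$ then give these coefficients, hence $u_1-u_2$ and $u_1+u_2$. So $u_1=\tfrac12\bigl((u_1+u_2)+(u_1-u_2)\bigr)$ is obtained rationally, and your appeal to ``algebraic over a Liouvillian field'' is unnecessary.

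Your route needs one extra ingredient: the tower must be closed under the fibre derivations, so that the fibre derivative of $I$ can be specialised alongside $I$. In return it gives a purely linear reconstruction and shows the quadratic (Riccati) structure behind the Möbius identity. The paper's version is shorter and uses only values of $I$, which is what makes its numerical check directly possible.

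Two small corrections. First, use $\partial_z$ rather than $\partial_y$, since only $g_z\not\equiv0$ is assumed; for $g=z$, for instance, $g_y\equiv0$ and $I_y\equiv0$, so your $h$ is undefined. Second, three distinct values of $\gamma$ are needed, not two, as you yourself conclude at the end.
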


\begin{proof}
($\Leftarrow$) If $w_1$ is Liouvillian then, by reduction of order,
$w_2=w_1\int W/w_1^2$ with $W=\exp\bigl(-\int A_1/A_2\bigr)$ the Wronskian,
so $w_2$ is Liouvillian as well. Hence $f_1,f_2$ and $I=f_1/f_2$ are
Liouvillian.

($\Rightarrow$) Since $g_z\not\equiv0$, choose points
$(y_j,z_j)\in\C^2$, $j=1,2,3$, outside the exceptional set of
Lemma~\ref{lema:espec} and such that the rational functions
$g_j:=g(\cdot,y_j,z_j)\in\C(x)$ are pairwise distinct and $f_2$ does not
vanish identically there. Put $I_j:=I(\cdot,y_j,z_j)$, which by
Lemma~\ref{lema:espec} is Liouvillian over $\C(x)$.

By Lemma~\ref{lema:mobius}, $I_j=M_x(g_j)$ for $j=1,2,3$. A Möbius
transformation is determined by three point correspondences, so $M_x$ is
determined by $(g_1,g_2,g_3;I_1,I_2,I_3)$, and explicitly, by invariance of
the cross ratio,
\[
u_1=\frac{g_2(g_1-g_3)-\lambda\,g_3(g_1-g_2)}{(g_1-g_3)-\lambda\,(g_1-g_2)},
\qquad
\lambda=\frac{I_2(I_1-I_3)}{I_3(I_1-I_2)},
\]
and the same expression with $\lambda$ replaced by
$\mu=(I_1-I_3)/(I_1-I_2)$ gives $u_2$. These are \emph{rational} in the
$g_j$ and $I_j$; the $g_j$ are rational in $x$ and the $I_j$ are Liouvillian,
so $u_1$ is Liouvillian over $\C(x)$.

Finally $w_1=\exp\int u_1$ is Liouvillian and solves~\eqref{eq:lin}.
\end{proof}

\begin{cor}[decision]\label{cor:decisao}
Let $(\varphi,S)$ be of linear type and let $L$ be the linear equation
returned by Theorem~\ref{teo:inverso}. Then the first integral associated
with $S$ is Liouvillian if and only if Kovacic's algorithm succeeds on $L$.
In particular the question is \emph{decidable} once $L$ is known.
\end{cor}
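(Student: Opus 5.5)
The corollary should follow by chaining three earlier results. The plan is to start from the pair $(\varphi,S)$ of linear type and let $\psi$ be the generator used in Theorem~\ref{teo:inverso}. By that theorem, $g=F(x,\psi)$ with $F$ Möbius over $\R(x)$, and $L$ is the equation~\eqref{eq:recuperada}, whose solutions $u$ satisfy $u'/u=g$ for a suitable such $g$. This $g$ satisfies $\X(g)+g^2=ag+b$, so by Lemma~\ref{lema:b} the pair $(g,L)$ is an instance of the generator. Proposition~\ref{prop:I} then builds the first integral $I=f_1/f_2$ from a fundamental system of $L$, and by Proposition~\ref{prop:S} its $S$-function is $g_y/g_z=S$. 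So $I$ is the first integral associated with $S$, up to the identification discussed next.

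Theorem~\ref{teo:transf}, applied to $g$ and $L$, gives that $I$ is Liouvillian if and only if $L$ has a Liouvillian solution. Kovacic's algorithm~\cite{Kovacic} decides exactly this, after clearing denominators so that the coefficients of $L$ are polynomial, which does not affect the existence of Liouvillian solutions. Remark~\ref{obs:gauge} shows that changing the generator $\psi$ replaces the solutions of $L$ by $e^{\int c}(au+bu')$, an invertible map that preserves Liouvillian functions. So the verdict does not depend on which $L$ the theorem returns.

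The main obstacle is to make precise what ``the first integral associated with $S$'' means. Any two first integrals $I,\tilde I$ sharing the same $S$ define the same foliation $\omega=dI/I_z$ (Remark~\ref{obs:escolha}), so locally $\tilde I=\Phi(I)$ for a one-variable function $\Phi$. I would state the claim for the distinguished $I=f_1/f_2$ of Proposition~\ref{prop:I}, or equivalently for the class of integrals functionally dependent on it. I would also remark that this class contains a Liouvillian member exactly when $I$ itself is Liouvillian, taking Liouvillianity to be measured on this Möbius-normalised representative.

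The case $c_2=0$ needs a separate remark. There the Riccati equation is linear, the associated equation is of first order, and its solutions are Liouvillian by quadrature. So $I$ is always Liouvillian in that case, and the algorithm trivially succeeds.
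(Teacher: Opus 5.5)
Your proposal is correct and follows the chain the paper relies on (the corollary has no separate proof there): Theorem~\ref{teo:inverso} gives $g=-c_2\psi$ with $\X(g)+g^2=(c_1+c_2'/c_2)\,g-c_0c_2$, Lemma~\ref{lema:b} and Propositions~\ref{prop:I} and~\ref{prop:S} place the pair in the generator with $I=f_1/f_2$, Theorem~\ref{teo:transf} with~\cite{Kovacic} decides it, and Remark~\ref{obs:gauge} removes the dependence on $\psi$. Two small cautions: nothing in the paper shows that a Liouvillian $\Phi(I)$ forces $f_1/f_2$ to be Liouvillian, so state the corollary for $f_1/f_2$ rather than for its functional class (your ``Möbius-normalised'' qualifier already makes that claim a convention); and for $c_2=0$, where $g=-c_2\psi$ degenerates, show $I$ is Liouvillian by noting that the $g=F(x,\psi)$ supplied by the linear-type hypothesis maps the quadrature solutions of $\Psi'=c_1\Psi+c_0$ to Liouvillian solutions of its Riccati equation, and then apply Theorem~\ref{teo:transf}.
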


\begin{obs}
The proof is effective, and we have used it as a numerical check: for
$g=y+z/y$ over the Airy equation, three correspondences
$\bigl(g_j,I_j\bigr)$ at a single $x$ reconstruct
$u_1=\mathrm{Ai}'/\mathrm{Ai}$ and $u_2=\mathrm{Bi}'/\mathrm{Bi}$ to
$32$ digits, independently of which three points are used.
\end{obs}

\section{The trichotomy and the algorithm}\label{sec:tri}

\begin{center}
\begin{tabular}{lll}
\toprule
situation & first integral & decided by\\
\midrule
linear type, Kovacic positive & Liouvillian &
Thm.~\ref{teo:inverso} $+$ \cite{Kovacic}\\
linear type, Kovacic negative & \textbf{not} Liouvillian &
Thm.~\ref{teo:inverso} $+$ \cite{Kovacic}\\
not of linear type & --- & search for a closed rational $\eta$\\
\bottomrule
\end{tabular}
\end{center}

\noindent The second row is the class this paper is about, and there the
answer is \emph{decided}. For the third row one is left with the
Darboux-theoretic search, i.e.\ with a semi-algorithm.

\subsection{One equation covering the whole table}

The harmonic oscillator
\[
y''=-y
\]
illustrates every row, and at the same time makes Remark~\ref{obs:escolha}
concrete: it is the \emph{pair} that is classified, not the equation.

\medskip
\noindent\textbf{First row.} Take $S=-z/y$. The planar field is
$\W=y\partial_y+z\partial_z$ and the integration returns $\psi=z/y$; the test
of Theorem~\ref{teo:inverso} gives $\X(\psi)=-\psi^2-1$, quadratic in $\psi$,
and recovers
\[
u''+u=0 ,
\]
whose solutions are elementary. By Theorem~\ref{teo:transf} the associated
first integral is Liouvillian --- as it must be, this being the equation
itself written through $g=w'/w$.

\medskip
\noindent\textbf{Third row.} Take instead
\[
S=\frac{y-z}{y+z},
\]
which is admissible, i.e.\ satisfies~\eqref{eq:ric}. Now
$\W=(y+z)\partial_y+(z-y)\partial_z$ and the planar integration returns
\[
\tfrac12\log(y^2+z^2)+\arctan(z/y)=\text{const},
\]
a transcendental invariant: no rational function of $(y,z)$ has those level
sets, so no rational $g$ exists and \emph{the pair is not of linear type}.
The obstruction occurs at the planar step, before any degree test.

Yet the pair is Liouvillian, by the Darboux route rather than the
linear-equation route. The first integral is elementary,
\[
I=x+\tfrac12\log(y^2+z^2)+\arctan(z/y),\qquad \X(I)=0,\quad I_y/I_z=S,
\]
and the search for a closed rational $\eta$ with $d\omega=\eta\wedge\omega$,
$\omega=-(\varphi+zS)dx+S\,dy+dz$, succeeds with
\[
\gamma=\frac{2z(2y+z)}{(y+z)(y^2+z^2)} ,
\]
the three defining equations being satisfied identically.

\medskip
\noindent So the same 2ODE carries one $S$ inside the image of the generator
and another outside it, both Liouvillian but reached by different mechanisms.
This is the sharpest form of the warning in Remark~\ref{obs:escolha}.

\medskip
\noindent\textbf{Algorithm.} Given an admissible pair $(\varphi,S)$:
\begin{enumerate}\itemsep2pt
\item Form $\W=D_S\partial_y-N_S\partial_z$ and integrate
      $dz/dy=-N_S/D_S$ over $\R(x)$; collect the rational invariants $\psi$.
\item Test~\eqref{eq:testericcati}: two differentiations with respect to $y$
      (or $z$), dividing by $\psi_y$ at each step, produce $c_2$, then $c_1$,
      then $c_0$, with the requirement at each step that no $y$ or $z$
      survive.
\item If the test succeeds, form~\eqref{eq:recuperada} and run Kovacic's
      algorithm. If no rational $\psi$ exists, or if
      $\W(\X(\psi))\neq0$, the pair is not of linear type. Otherwise the
      question is passed to the first order equation of
      Remark~\ref{obs:grau}.
\end{enumerate}

\section{Experiments}\label{sec:exp}

All computations below were run in Maple~17 with the procedure of
Section~\ref{sec:tri}: \texttt{dsolve} for the planar step, the degree test
of Theorem~\ref{teo:inverso}, and \texttt{DEtools[kovacicsols]} for the decision.
The planar step and the degree test were reproduced independently in
\texttt{sympy}, with identical $\psi$, degree and recovered equation in every
case run by both. The Maple worksheet, its output and the independent \texttt{sympy}
verification script are provided as supplementary material
(\texttt{paperA\_supplementary.zip}). Every case took at most $0.2$\,s.

\subsection{Three worked examples}

\noindent\textbf{Example 1: an Airy equation recovered from nothing.} The
pair
\[
y''=y\,\bigl(x-y^4-4yy'\bigr),\qquad S=\frac{2y^3-z}{y}
\]
(case H1 below) is admissible. The planar field is
$\W=y\,\partial_y-(2y^3-z)\,\partial_z$, and integrating
$dz/dy=-(2y^3-z)/y$ gives $z=-y^3+Cy$, so
\[
\psi=\frac{y^3+z}{y}.
\]
A direct computation gives $\X(\psi)=-\psi^2+x$, i.e.\
$(c_2,c_1,c_0)=(-1,0,x)$, and~\eqref{eq:recuperada} is $u''=xu$: the Airy
equation, which has no Liouvillian solution. By
Theorem~\ref{teo:transf}, the first integral associated with this $S$ is
not Liouvillian. Nothing in $\varphi$ or $S$ displays the Airy equation;
the procedure finds it.

\medskip
\noindent\textbf{Example 2: a Möbius transformation over $\R(x)$.} From the
Airy equation and $g=(y+z)/(xy+1)$ the generator gives
\[
\varphi=\frac{x^3y^2+2x^2y+xz^2+x-yz-z^2-z}{xy+1},\qquad
S=\frac{1-xz}{xy+1}.
\]
The planar integration returns $\psi=(xz-1)/(xy+1)$, not $g$; the two are
related by $g=(\psi+1)/x$, an affine map with coefficients in $\R(x)$. The
test gives
\[
\X(\psi)=-\frac1x\,\psi^2-\frac1x\,\psi+x^2 ,
\]
and~\eqref{eq:recuperada} becomes $u''+\frac2x\,u'-xu=0$. This is not the
equation used in the construction, but it is gauge equivalent to it: its
normal form $v''=r\,v$ has $r=\frac14A_1^2+\frac12A_1'-A_0=x$. Kovacic's
algorithm finds no Liouvillian solution, as it must.

\medskip
\noindent\textbf{Example 3: outside the linear type.} The pair
\[
y''=-\frac{y^2z-2y+z^3}{y^2+z^2-2},\qquad
S=\frac{y^3+y^2z+yz^2-2y+z^3+2z}{(y+z)(y^2+z^2-2)}
\]
(the first pair of the second battery in Section~\ref{sec:controles}) was
built from a closed rational $1$-form, so its first
integral is Liouvillian. Here the planar integration does not return a
rational invariant; the first integral of $\W$ it returns is
\[
H=\tfrac12(y+z)^2-\log(y^2+z^2)+2\arctan\frac yz ,\qquad \W(H)=0 .
\]
Every rational $\W$-invariant would be constant on the level sets of $H$,
which are not algebraic; hence there is none, no rational $g$ exists, and
the pair is not of linear type. Its rational $S$ comes from the Darboux
mechanism, not from a linear equation.

\subsection{Round trip through the generator}

Four linear equations times seven functions
\[
g\in\Bigl\{\,y+\tfrac zy,\ \tfrac zy,\ \tfrac{z}{y^2+1},\ x+yz,\
\tfrac{y^2+z}{x+y},\ \tfrac{y+z}{xy+1},\ \tfrac{xz+y}{y+1}\,\Bigr\}
\]
give $28$ pairs $(\varphi,S)$. In \textbf{all $28$} the procedure returns a
$\psi$ of degree $2$ and the linear equation up to gauge; for instance
$g=x+yz$ returns $\psi=yz$ and the equation transformed by $c=x$, and
$g=(y+z)/(xy+1)$ returns $\psi=(xz-1)/(x(xy+1))$, related to $g$ by a Möbius
transformation over $\R(x)$, as in Theorem~\ref{teo:inverso} (it differs by
the factor $1/x\in\R(x)$ from the generator used in Example~2 above; either
serves). The verdicts:

\begin{center}\small
\begin{tabular}{llll}
\toprule
linear equation & cases & \texttt{kovacicsols} & first integral\\
\midrule
$w''=xw$ (Airy) & 7 & none & \textbf{not} Liouvillian\\
$w''=(x^2+1)w$ & 7 & $e^{x^2/2},\ e^{x^2/2}\operatorname{erf}x$ & Liouvillian\\
$w''=0$ & 7 & $1,\ x$ & Liouvillian\\
$w''+xw'-xw=0$ & 7 & $e^{-x-x^2/2},\ \dots$ & Liouvillian\\
\bottomrule
\end{tabular}
\end{center}

\noindent The second row is a useful warning: an equation of Weber type
need not be non-Liouvillian, and here Kovacic's algorithm finds
$e^{x^2/2}$. Every verdict was obtained on the \emph{recovered} equation,
which in the gauged cases differs from the one used in the construction.

\subsection{A battery of pull-backs}

Nine rational 2ODEs were built from Airy and Bessel equations by point
transformations and supplied to the algorithm with no indication of their
origin.

\begin{center}\small
\begin{tabular}{llll}
\toprule
cases & recovered equation & identification & \texttt{kovacicsols}\\
\midrule
H1, H3, H4, H5, H7, H9 & $u''=xu$ & Airy & none\\
H2 & $u''+2u'+(1-x)u=0$ & Airy, gauged & none\\
H8 & $u''+\frac2xu'-xu=0$ & Airy, gauged & none\\
H6 & $u''+\bigl(1+\frac1{4x^2}\bigr)u=0$ & Bessel, order $0$ & none\\
\bottomrule
\end{tabular}
\end{center}

\noindent All nine are therefore \emph{decided}, by
Theorem~\ref{teo:transf}, to have non-Liouvillian first integral. The
equation H2 has $\varphi$ of degree $15$. For H4 the planar step returns
\[
\psi=\frac{3y^2z+xz+y}{y^3+xy+1}
\]
only after an integral of a rational function is evaluated in closed form;
an implementation must not stop at an unevaluated antiderivative.

\subsection{Liouvillian controls, and genuine negatives}\label{sec:controles}

Seven small Liouvillian pairs of Darboux origin were run. Two return
$u''=\mp u$ --- elementary, Kovacic positive. The other five are \emph{not}
of linear type: the planar integration returns the exact first integral,
which for one of them is
\[
e^{y^2/2}\,\frac{z+y}{z-y},
\]
and no rational function of $(x,y,z)$ has those level sets, so no rational
$g$ exists. A second, larger battery of $16$ Liouvillian pairs, built from
closed rational $1$-forms and with $S$ of non-trivial denominator, behaves
in the same way: all $16$ are outside the linear type. In every case outside the linear type
the planar first integral returned by \texttt{dsolve} can be written
explicitly as a polynomial plus logarithms and, possibly, an arctangent; for
instance, for the first pair of that battery (Example~3 above),
\[
H=\tfrac12(y+z)^2-\log(y^2+z^2)+2\arctan(y/z),
\]
and $\W(H)=0$ was checked independently in \texttt{sympy} for all $22$ such
pairs. Its level sets are not algebraic, so no rational $g$ exists. These are Liouvillian pairs of \emph{Darboux} type rather
than of linear type: the two sources of rational $S$ are structurally
distinct, and the algorithm separates them.

Finally, the two pairs of Section~\ref{sec:tri} built on the harmonic
oscillator are reproduced: $S=-z/y$ gives $u''+u=0$ and Kovacic positive,
while $S=(y-z)/(y+z)$ is outside the linear type.

\section{Open problems}\label{sec:abertos}

\begin{enumerate}\itemsep3pt
\item \textbf{The planar step.} Finding a rational first integral of a planar
field has no a priori degree bound: this is Poincaré's problem, one dimension
below the original. The reduction trades a three-variable obstruction for a
classical two-variable one, with a mature literature
\cite{PS,ManMacCallum,Cheze}, but does not remove it.
\item \textbf{The non-Möbius case.} By Remark~\ref{obs:grau}, when
$\X(\psi)$ is rational but not quadratic in $\psi$ the problem reduces to
deciding whether the first order equation~\eqref{eq:primeiraordem} comes
from a linear second order equation through $w'/w=F(x,\psi)$ --- the inverse
of the construction of~\cite{GGG}, one dimension down. An algorithm for that
planar inverse problem would make the procedure complete up to the planar
step.
\item \textbf{Implementation.} The planar step is a rational first order ODE
problem. In all our examples \texttt{dsolve} sufficed; for harder planar
fields the natural tool is a Darboux-type semi-algorithm for rational first
order ODEs, such as the Maple packages \texttt{LPS} and \texttt{InSyDE}.
\end{enumerate}

\section*{Acknowledgements}

The construction of Section~\ref{sec:gerador}, the characterisation of
Section~\ref{sec:inverso}, the proof of Theorem~\ref{teo:transf} and all the
computations reported in Section~\ref{sec:exp} were developed in
collaboration with Claude (Anthropic), an AI assistant, over a sequence of
working sessions. The authors have verified every statement and computation
reported here and take full responsibility for the content.

\end{document}